\ifdef{\crop}{%
\usepackage[includeheadfoot,twoside=False,paperwidth=448pt,paperheight=587pt,rmargin=15pt,lmargin=15pt,tmargin=15pt,bmargin=15pt]{geometry}%
}{%
\setlength{\topmargin}{22mm}
\addtolength{\topmargin}{-1in}
\setlength{\oddsidemargin}{32mm}
\addtolength{\oddsidemargin}{-1in}
\setlength{\evensidemargin}{32mm}
\addtolength{\evensidemargin}{-1in}
\setlength{\textwidth}{146mm}
\setlength{\textheight}{240mm}
}%
\theoremstyle{plain}
\newtheorem{thm}{Theorem}[section]
\newtheorem*{thm*}{Theorem}
\newaliascnt{prop}{thm}
\newaliascnt{cor}{thm}
\newaliascnt{lem}{thm}
\newaliascnt{claim}{thm}
\newaliascnt{defn}{thm}
\newaliascnt{ques}{thm}
\newaliascnt{conj}{thm}
\newaliascnt{fact}{thm}
\newaliascnt{rem}{thm}
\newaliascnt{ex}{thm}
\newtheorem{prop}[prop]{Proposition}
\newtheorem{cor}[cor]{Corollary}
\newtheorem{lem}[lem]{Lemma}
\newtheorem{claim}[claim]{Claim}
\newtheorem*{prop*}{Proposition}
\newtheorem*{cor*}{Corollary}
\newtheorem*{lem*}{Lemma}
\newtheorem*{claim*}{Claim}
\theoremstyle{definition}
\newtheorem{ques}[ques]{Question}
\newtheorem{conj}[conj]{Conjecture}
\newtheorem*{defn*}{Definition}
\newtheorem*{ques*}{Question}
\newtheorem*{conj*}{Conjecture}
\newtheorem*{prob*}{Problem}
\newtheorem{rem}[rem]{Remark}
\newtheorem*{fact*}{Fact}
\newtheorem*{rem*}{Remark}
\newtheorem*{ex*}{Example}
\def\textsectionN~{\textsection{}}
\renewcommand\phi{\varphi}
\renewcommand\epsilon{\varepsilon}
\renewcommand\leq{\leqslant}
\renewcommand\geq{\geqslant}
\renewcommand\subsetneq{\subsetneqq}
\newcommand{\set}{  \@ifstar{\@setstar}{\@set}}\newcommand{\@setstar}[2]{\{\, #1 \mid #2 \,\}}
\newcommand{\@set}[1]{\{ #1 \}}
\newcommand{\trans}[1][1]{\raisebox{#1ex}{\scriptsize\kern0.1em$t$\kern-0.1em}}
\DeclareMathOperator{\mult}{mult}
\DeclareMathOperator{\Supp}{Supp}
\DeclareMathOperator{\lct}{lct}
\DeclareMathOperator{\coeff}{coeff}
\def\Q{\mathbb{Q}}
\def\R{\mathbb{R}}
\def\C{\mathbb{C}}
\def\r+{\mathbb{R}_{\geq 0}}
\def\ep{\varepsilon}
\def\r+{{\R}_{\geq 0}}
\def\q+{{\Q}_{\geq 0}}
\def\arw{\rightarrow}
\def\*c{\C^{\times}}
\def\C{\mathbb {C}}
\def\Q{\mathbb {Q}}
\def\R{\mathbb {R}}
\newcommand{\cali}{\mathcal {I}}
\newcommand{\calj}{\mathcal {J}}
\newcommand{\calo}{\mathcal {O}}
\title[A remark on higher syzygies on abelian surfaces]{A remark on higher syzygies on abelian surfaces}
\author[A.~Ito]{Atsushi~Ito}
\address{Graduate School of Mathematics,
Nagoya University,
Nagoya, Japan}
\email{atsushi.ito@math.nagoya-u.ac.jp}
\subjclass[2010]{14C20,14K99}
\keywords{Syzygy, Abelian surface}
\begin{document}

\maketitle

\begin{abstract}
In this note, we give a slight improvement of a result of A.~K\"{uronya} and V.~Lozovanu
about higher syzygies on abelian surfaces.
\end{abstract}

\section{Introduction}

We work over the complex number field $\C$.
In \cite{KL},
A.~K\"{u}ronya and V.~Lozovanu give the following Reider-type theorem for higher syzygies on abelian surfaces:

\begin{thm}[{\cite[Theorem 1.1]{KL}}]\label{thm_KL}
Let $p$ be a non-negative integer,
$X$ be an abelian surface,
and $L$ be an ample line bundle on $X$ with $(L^2) \geq 5(p+2)^2$.
Then the following are equivalent.
\begin{enumerate}
\item $X$ does not contain an elliptic curve $C$ with $(L.C) \leq p+2$, 
\item Property $(N_p)$ holds for $L$.
\end{enumerate}
\end{thm}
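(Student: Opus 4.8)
The statement is an equivalence, so the plan is to prove the two implications separately. The implication $(2)\Rightarrow(1)$ is essentially formal, while $(1)\Rightarrow(2)$ carries all of the content and all of the difficulty.

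For $(2)\Rightarrow(1)$ I would argue by contraposition. Suppose $X$ contains an elliptic curve $C$ with $(L.C)\leq p+2$, and restrict to $C$. Then $L|_C$ is a line bundle of degree $d=(L.C)\leq p+2$ on an elliptic curve, and the sharp theory of syzygies on elliptic curves says that a degree-$d$ bundle satisfies $(N_p)$ if and only if $d\geq p+3$; in particular $L|_C$ fails $(N_p)$ (for $p=1$, $d=3$ this is just the plane cubic, whose ideal needs a cubic generator). Since the restriction map $H^0(X,L)\to H^0(C,L|_C)$ is surjective (a vanishing that holds on the abelian surface in this range), a standard comparison of Koszul cohomology propagates this non-linear-syzygy obstruction from $C$ up to $X$, so $(N_p)$ fails for $L$. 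This gives $(2)\Rightarrow(1)$.

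The substance is $(1)\Rightarrow(2)$. Here the plan is to reduce $(N_p)$ to a local-positivity statement and then to verify that statement. Writing $M_L$ for the kernel bundle defined by $0\to M_L\to H^0(X,L)\otimes\sO_X\to L\to 0$, property $(N_p)$ follows once one knows $H^1(X,\wedge^{\,i}M_L\otimes L^{\otimes j})=0$ for $1\leq i\leq p+1$ and all $j\geq 1$. Rather than attack these high-rank bundles directly (Reider's theorem only sees rank two), I would recast the required positivity of $L$ itself in the language of infinitesimal Newton--Okounkov bodies: $(N_p)$ holds provided the infinitesimal body of $L$ at every point contains a standard simplex whose size is governed by $p+2$. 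The translation-invariance of the abelian surface is the key simplification here, since every local positivity invariant of $L$ is the same at each point, so it suffices to work at the origin. The numerical hypothesis $(L^2)\geq 5(p+2)^2$ is calibrated exactly so that, after passing to the relevant square-root estimate $\sqrt{(L^2)}\geq\sqrt5\,(p+2)$, a simplex of the size the syzygy criterion demands actually fits inside the body.

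The hard part, and the step I expect to be the main obstacle, is proving this local positivity, and this is precisely where hypothesis $(1)$ enters. Following Reider's strategy, a failure of the desired positivity at a point produces, via Bogomolov instability of an associated rank-two bundle, a destabilizing sub-line-bundle whose numerical data forces an effective curve through that point of small $L$-degree. On an abelian surface such a curve must, after computing its arithmetic genus and invoking $(L^2)\geq 5(p+2)^2$ to rule out the other cases, be an elliptic curve $C$ with $(L.C)\leq p+2$---exactly the configuration excluded by $(1)$. Hence no obstruction exists, the infinitesimal body is large enough, the cohomology vanishes, and $(N_p)$ holds. The delicate point throughout is the numerology: showing that the destabilizing curve is genuinely elliptic with $(L.C)\leq p+2$ rather than some other low-degree curve, and checking that the constant $5$ is what the simplex (equivalently, the Seshadri-type) estimate actually forces.
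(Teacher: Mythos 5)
Your proposal is essentially a reconstruction of the original K\"uronya--Lozovanu proof, whereas the paper quotes the theorem from \cite{KL} and only (re)proves the hard implication $(1)\Rightarrow(2)$ -- by a genuinely different method, and under the weaker hypothesis $(L^2)>4(p+2)^2$. Your treatment of $(2)\Rightarrow(1)$ (restrict to the elliptic curve, use that a degree $\leq p+2$ bundle on an elliptic curve fails $(N_p)$, propagate the nonzero Koszul class) is the standard argument and is what \cite{KL} do; note only that the propagation step requires surjectivity of restriction on sections together with Green's comparison lemma for Koszul cohomology, which deserves to be stated rather than gestured at, and that the ``Bogomolov instability of a rank-two bundle'' mechanism you invoke for $(1)\Rightarrow(2)$ is not literally how \cite{KL} proceed (they analyse Zariski decompositions of $\pi^*L-tE$ on the blow-up to control the infinitesimal Okounkov body). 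The real divergence from the paper is at the local-positivity step. Both routes feed into the same criterion of Lazarsfeld--Pareschi--Popa (\autoref{thm_LPP}): produce an effective $\Q$-divisor $F_0$ with $\frac{1}{p+2}L-F_0$ ample and $\calj(X,F_0)=\cali_o$. You verify this via infinitesimal Newton--Okounkov bodies, which is exactly where the constant $5$ enters; the paper instead constructs $F_0$ by the Ein--Lazarsfeld/Helmke/Kawamata technique from Fujita-type base-point-freeness: choose $D\equiv B=\frac{1}{p+2}L$ with $\mult_o D>2$ (possible as soon as $\pi^*B-2E$ is big), pass to the log canonical threshold, show that a one-dimensional minimal log canonical centre must be a smooth elliptic curve through $o$ (using $(C^2)\geq 0$ on an abelian surface against the negativity of its strict transform), and cut it down using $(B.C)>1$. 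The payoff of the paper's route is quantitative: it needs only $(L^2)>4(p+2)^2$, improving the constant $5$ to $4$; your route, as calibrated, proves the theorem as stated but cannot reach the sharper bound.
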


We refer the readers to \cite{Ei}, \cite[Chapter 1.8.D]{La} for the definition of property ($N_p$).
We just note here that ($N_p$)'s consist an increasing sequence of positivity properties.
For example, 
($N_0$) holds for $L$ if and only if $L$ defines a projectively normal embedding,
and ($N_1$) holds if and only if ($N_0$) holds and the homogeneous ideal of the embedding is generated by quadrics. 
In the note,
we show a slight improvement of \autoref{thm_KL} as follows.

\begin{thm}\label{main_rem}
In \autoref{thm_KL},
it is enough to assume $(L^2) > 4(p+2)^2$ instead of $(L^2) \geq 5(p+2)^2$.
\end{thm}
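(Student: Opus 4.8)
The plan is to retain the overall strategy of \cite{KL} intact and to sharpen only the numerical step in which the bound on $(L^2)$ enters. I will prove the nontrivial implication (1)$\Rightarrow$(2); the converse does not involve the numerical hypothesis and is exactly as in \cite{KL}, where an elliptic curve $C$ with $(L.C)\le p+2$ is shown to produce a nonzero Koszul class obstructing $(N_p)$. The engine of \cite{KL} is a local positivity criterion for $(N_p)$ coming from infinitesimal Newton--Okounkov bodies; in the form I shall use it, it asserts that $(N_p)$ holds for $L$ provided that, for a general point $x\in X$, writing $\pi\colon X'\to X$ for the blow-up at $x$ with exceptional divisor $E$, the class $\pi^{*}L-(p+2)E$ is nef and the class $\pi^{*}L-2(p+2)E$ is big on $X'$. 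I take this criterion as given from \cite{KL} and verify both conditions under the weaker hypothesis $(L^2)>4(p+2)^2$.

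The first reduction is homogeneity. Since $X$ is an abelian surface, translations act transitively and preserve all numerical data, so the Seshadri constant $\epsilon(L;x)$ and the positivity of $\pi^{*}L-tE$ are independent of $x$, and every elliptic curve can be translated so as to pass through $x$; thus it suffices to check the two conditions at one point. Set $d:=(L^2)$ and let $e$ be the minimum of $(L.C)$ over all elliptic curves $C\subset X$ (with $e:=+\infty$ if none exists); by adjunction, since $K_X=0$, an irreducible curve with $C^2=0$ has genus $1$, while $C^2\ge 2g-2\ge 2$ otherwise, so the elliptic curves are exactly the classes with $C^2=0$, and hypothesis (1) is precisely $e\ge p+3$. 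It is known for abelian surfaces, and follows from homogeneity together with the classification of submaximal curves, that $\epsilon(L;x)=\min\{e,\sqrt d\}$. Since $\sqrt d>2(p+2)$ and $e\ge p+3$ give $\epsilon(L;x)>p+2$, the class $\pi^{*}L-(p+2)E$ is nef; this already uses only a small part of the hypothesis.

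The heart of the matter is the bigness of $\pi^{*}L-2(p+2)E$, equivalently that the pseudoeffective threshold $\mu(L;x):=\max\{t:\pi^{*}L-tE\text{ is big}\}$ exceeds $2(p+2)$. If $e=+\infty$ this is clear, since then $\mu=\sqrt d>2(p+2)$. Otherwise, fix a minimal elliptic curve $C$ through $x$ and let $\tilde C=\pi^{*}C-E$, so that $\tilde C^2=-1$ and $(\pi^{*}L-tE).\tilde C=e-t$. For $t\ge e$ the Zariski decomposition of $\pi^{*}L-tE$ has negative part $(t-e)\tilde C$, and its positive part $P_t=\pi^{*}(L-(t-e)C)-eE$ satisfies
\[
P_t^2=d+e^2-2et,
\]
whence $\mu(L;x)=(d+e^2)/(2e)$. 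Thus $\mu(L;x)>2(p+2)$ is equivalent to $d>e\bigl(4(p+2)-e\bigr)$, and since the right-hand side is a downward quadratic in $e$ with maximum value $4(p+2)^2$ attained at $e=2(p+2)$, the hypothesis $d>4(p+2)^2$ forces bigness for every admissible $e$. This is exactly where the improvement occurs: $4(p+2)^2$ is the optimal bound making $\pi^{*}L-2(p+2)E$ big, whereas \cite{KL} secure bigness through the stronger demand $\bigl(\pi^{*}L-2(p+2)E\bigr)^2>(p+2)^2$, i.e. $d-4(p+2)^2>(p+2)^2$, which is what produces their $5(p+2)^2$.

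The main obstacle is to make the threshold computation rigorous when several elliptic curve classes pass through $x$: the Zariski chamber structure of $\pi^{*}L-tE$ on $X'$ is then more intricate than the single-wall picture above, and I must verify that the minimal class still governs the threshold, i.e. that no combination of further elliptic curves lowers $\mu(L;x)$ below $(d+e^2)/(2e)$, while confirming that higher-genus curves (for which $C^2\ge 2$, forcing $(L.C)$ large by the Hodge index theorem) contribute no earlier wall. Once the bigness of $\pi^{*}L-2(p+2)E$ and the nefness of $\pi^{*}L-(p+2)E$ are established at a general point, the criterion of \cite{KL} yields property $(N_p)$ and completes the proof.
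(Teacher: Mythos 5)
Your proposal rests on a criterion that you ``take as given from [KL]'' but which is not actually proved there: namely, that $(N_p)$ holds as soon as $\pi^{*}L-(p+2)E$ is nef and $\pi^{*}L-2(p+2)E$ is big. The genuine criterion underlying \cite{KL} is the multiplier-ideal statement of \cite{LPP} (\autoref{thm_LPP}): one must produce an effective $\Q$-divisor $F_0$ with $\frac{1}{p+2}L-F_0$ ample and $\calj(X,F_0)=\cali_o$. K\"uronya and Lozovanu verify this via infinitesimal Okounkov bodies, and their local condition is strictly stronger than bigness of $\pi^{*}L-2(p+2)E$ --- it amounts to a containment of an inverted simplex, equivalently a bound on the coefficients of the negative part of the Zariski decomposition of $\pi^{*}B-2E$ (compare the lemma in \textsectionN 3.2, which recovers \cite[Theorem 2.5(1)]{KL}). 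Paying for that stronger condition is exactly what produces their $5(p+2)^2$. So the statement you assume is, in substance, \autoref{thm_suff} of this paper, and proving it is the entire content of the argument: one constructs $D\equiv B=\frac{1}{p+2}L$ with $\mult_o(D)>2$ from the bigness of $\pi^{*}B-2E$, passes to the log canonical threshold, shows that any $1$-dimensional minimal log canonical center must be a smooth elliptic curve through $o$ (because its strict transform lies in the negative part, forcing $(C^2)=0$ and $\mult_o(C)=1$ on an abelian surface), and then cuts this center down using $(B.C)>1$ together with adjunction and inversion of adjunction. None of this appears in your write-up, so the proposal is circular at its key step.

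Two smaller points. First, once the hypothesis is $(L^2)>4(p+2)^2$, the bigness of $\pi^{*}L-2(p+2)E$ is immediate from $\bigl(\pi^{*}L-2(p+2)E\bigr)^2=(L^2)-4(p+2)^2>0$ together with $\bigl(\pi^{*}L-2(p+2)E\bigr)\cdot\pi^{*}L=(L^2)>0$; your computation of the pseudoeffective threshold via $\mu(L;x)=(d+e^2)/(2e)$, and the unresolved multi-wall Zariski chamber analysis you flag at the end, are not needed for this and do not address the real difficulty. Second, the implication $(2)\Rightarrow(1)$ does use a (mild) numerical hypothesis in \cite{KL}, namely $(L^2)\geq 4p+5$, which is of course implied by $(L^2)>4(p+2)^2$.
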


\begin{rem}
In \cite{AKL},
the equivalence of (1) and (2) in
\autoref{thm_KL} is proved for $K3$ surfaces under the assumption $(L^2) > \frac12 (p+4)^2$.
\end{rem}

In \cite{KL}, $(2) \Rightarrow (1)$ in \autoref{thm_KL} is proved under the assumption
$(L^2) \geq 4p+5$.
Hence to prove \autoref{main_rem},
it suffices to show the converse $(1) \Rightarrow (2)$ under the assumption $(L^2) > 4(p+2)^2$.

To show $(1) \Rightarrow (2)$,
we use the following result in \cite{LPP} (see also \cite[Section 3]{KL}):

\begin{thm}[\cite{LPP}]\label{thm_LPP}
Let $p$ be a non-negative integer,
$X$ be an abelian variety,
and $L$ be an ample line bundle on $X$ 
such that there exists an effective 
$\Q$-divisor $F_0$ on $X$ satisfying
\begin{itemize}
\item[(i)] $\frac{1}{p+2}L-F_0$ is ample,
\item[(ii)] the multiplier ideal $\calj(X,F_0)$ coincides with the maximal ideal 
$\cali_o$ of the origin $o \in X$.
\end{itemize}
Then $(N_p)$ holds for $L$.
\end{thm}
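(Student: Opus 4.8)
The plan is to deduce $(N_p)$ from the standard cohomological criterion for syzygies in terms of the kernel bundle, and to feed the two hypotheses on $F_0$ into that criterion through Nadel vanishing together with the group law of $X$.

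\textbf{Reduction to the kernel bundle.} Since $L$ is ample on an abelian variety it is globally generated, so with $V = H^0(X,L)$ we have the kernel bundle $M_L$ defined by
\[
0 \to M_L \to V \otimes \sO_X \to L \to 0 .
\]
By the criterion of Green and Lazarsfeld (in the form used by Pareschi--Popa on abelian varieties), $(N_p)$ holds for $L$ once $H^1\bigl(X, \bigwedge^{p+1}M_L \otimes L^{\otimes q}\bigr)=0$ for all $q \geq 1$. On an abelian variety it suffices to show
\[
H^1\bigl(X,\ \textstyle\bigwedge^{p+1}M_L \otimes L \otimes \alpha\bigr)=0 \quad \text{for every } \alpha \in \Pic^0(X) ;
\]
this is the $M$-regularity of $\bigwedge^{p+1}M_L \otimes L$, which by the theory of Pareschi--Popa propagates the vanishing to every positive power $L^{\otimes q}$.

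\textbf{Unwinding the exterior power.} Because $L$ has rank one, taking exterior powers of the defining sequence produces short exact sequences
\[
0 \to \textstyle\bigwedge^{k}M_L \to \textstyle\bigwedge^{k}V \otimes \sO_X \to \textstyle\bigwedge^{k-1}M_L \otimes L \to 0 ,
\]
and chasing these reduces the vanishing above to a bounded list of surjectivity statements for multiplication maps and vanishings of the form $H^i\bigl(X, L^{\otimes m}\otimes \alpha \otimes \calj\bigr)$ with $i \geq 1$, where $\calj$ is a multiplier ideal cosupported at the origin and, after translation, at finitely many points. The number $p+2$ enters arithmetically here: the top of the Koszul filtration carries the twist $L^{\otimes (p+2)}$, which must be distributed as $p+1$ "separating" factors plus one ample buffer.

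\textbf{Local positivity via Nadel vanishing and assembly.} This is where hypotheses (i) and (ii) are used. By (ii), $\calj(X,F_0)=\cali_o$, and since multiplier ideals are invariant under the group law, $\calj(X,t_x^*F_0)=\cali_x$ for every $x \in X$; moreover $t_x^*$ acts trivially on numerical classes, so $\tfrac1{p+2}L - t_x^*F_0$ is again ample by (i). For generically chosen distinct points $x_1,\dots,x_{p+1}$ one has $\calj\bigl(X,\sum_j t_{x_j}^*F_0\bigr)=\cali_{x_1}\cdots\cali_{x_{p+1}}$, the ideal of the reduced scheme $\{x_1,\dots,x_{p+1}\}$. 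Summing $p+1$ of the ample classes $\tfrac1{p+2}L - t_{x_j}^*F_0$ and adding the leftover $\tfrac1{p+2}L$ shows that
\[
(L\otimes\alpha)-\sum_{j=1}^{p+1}t_{x_j}^*F_0 \quad \text{is ample for every } \alpha \in \Pic^0(X).
\]
Since $K_X=\sO_X$, Nadel vanishing then gives $H^i\bigl(X, (L\otimes\alpha)\otimes \calj(\sum_j t_{x_j}^*F_0)\bigr)=0$ for $i>0$, i.e. exactly the separation-of-points vanishings required above. Combining these with the Koszul sequences yields $H^1(\bigwedge^{p+1}M_L\otimes L\otimes\alpha)=0$ for all $\alpha$, hence $(N_p)$. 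Conceptually, $p+1$ of the $p+2$ available copies of $\tfrac1{p+2}L$ are spent separating the $p+1$ points governing $\bigwedge^{p+1}M_L$, while the last copy supplies the ampleness Nadel vanishing needs.

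\textbf{Main obstacle.} The heart of the argument is the interface between the last two steps: one must (a) handle the exterior power $\bigwedge^{p+1}M_L$ rather than a tensor power, which forces the Koszul bookkeeping and the passage through higher cohomology $H^i$ with $i>1$, and (b) identify the multiplier ideal of a sum of translated divisors with the ideal of reduced points, which requires a subadditivity/restriction argument and a sufficiently general choice of translates so that no unexpected base locus appears. Keeping every residual line bundle ample while distributing $L$ into $p+2$ pieces is the delicate point, and it is precisely here that the sharp numerical hypothesis (i) is consumed.
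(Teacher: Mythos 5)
First, a framing remark: the paper under review does not prove this statement at all --- it is quoted verbatim from \cite{LPP} (with a pointer to \cite[Section~3]{KL} for an exposition), so your attempt can only be measured against the original Lazarsfeld--Pareschi--Popa proof. At the level of strategy you have reconstructed that proof correctly: Pareschi's kernel-bundle criterion reducing $(N_p)$ to the vanishing $H^1(X,\bigwedge^{p+1}M_L\otimes L\otimes\alpha)=0$ for all $\alpha\in\Pic^0(X)$, translates $t_{x}^*F_0$ via the group law, Nadel vanishing using $K_X=\sO_X$, and the budget ``$p+1$ separating copies of $\frac{1}{p+2}L$ plus one ample buffer'' is exactly the counting in \cite{LPP}.

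That said, there are genuine gaps, two of which sit precisely at the steps you yourself label ``the main obstacle'' and then do not carry out. (0) Your opening claim that an ample line bundle on an abelian variety is globally generated is false: a principal polarization has $h^0=1$ and base divisor $\Theta$. Global generation (needed even to define $M_L$) must instead be extracted from the hypotheses, e.g.\ $\calj(t_x^*F_0)=\cali_x$ plus Nadel gives $H^1(L\otimes\alpha\otimes\cali_x)=0$ for all $x$ and $\alpha$. (1) The identity $\calj\bigl(\sum_j t_{x_j}^*F_0\bigr)=\cali_{x_1}\cdots\cali_{x_{p+1}}$: subadditivity gives only the inclusion $\subseteq$, which is the useless direction --- Nadel vanishing for $L\otimes\alpha\otimes\calj\bigl(\sum_j t_{x_j}^*F_0\bigr)$ transfers to the ideal of the reduced points only if the quotient $\cali_{x_1}\cdots\cali_{x_{p+1}}/\calj$ has zero-dimensional support, i.e.\ only if the non-klt locus of the superposition away from the chosen points is finite. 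Each pair $(X,t_{x_j}^*F_0)$ is merely klt away from its point, and stacking $p+1$ translates can a priori create new positive-dimensional non-klt loci; ruling this out needs an actual generic-translate lemma (or, as in \cite{LPP}, an inductive structure that inserts one translate at a time), not just the phrase ``generically chosen distinct points.'' (2) The Koszul chase: on an abelian variety of arbitrary dimension, $H^1(\bigwedge^{p+1}M_L\otimes L\otimes\alpha)=0$ does \emph{not} formally reduce, by chasing your displayed wedge sequences, to separation-of-points vanishings; that reduction is the theorem-level content. In \cite{LPP} (following \cite{Pa}) it is obtained by replacing $\bigwedge^{p+1}M_L$ with $M_L^{\otimes(p+1)}$, of which it is a direct summand in characteristic zero, and then running an induction in which the surjectivity of the multiplication maps you defer to ``a bounded list of surjectivity statements'' is supplied by the Pareschi--Popa machinery (M-regularity, continuous global generation, preservation of vanishing). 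So the skeleton is faithful to \cite{LPP}, but both joints where the proof actually lives are left unbuilt, and one supporting claim is false as stated.
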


To construct such a divisor $F_0$,
K\"{uronya} and Lozovanu use
Okounkov bodies as a key tool.
Instead, we use a standard technique developed in the study of Fujita's base point freeness conjecture \cite{EL}, \cite{He}, \cite{Ka} \cite{Ko}, etc.

We also note that G.~Pareschi \cite{Pa} proved a conjecture of R.~Lazarsfeld
which claims that for an ample line bundle $L$ on an abelian variety,
 $(N_p)$ holds for $L^{\otimes p+3}$.
On the other hand, results in \cite{KL}, \cite{LPP} can prove ($N_p$) for $L$ which is not necessarily a multiple of a line bundle.

\subsection*{Acknowledgments}
The author was supported by Grant-in-Aid 
for Scientific Research (No.\ 26--1881, 17K14162).

\section{Preliminaries}

We recall some notations. We refer the reader to \cite{KM}, \cite[Chapter 9]{La2} for detail.

Let $X$ be a smooth variety 
and $D=\sum_i d_i D_i$ be an effective $\Q$-divisor on $X$. 
Let $f: Y \arw X $ be the log resolution of $(X,D)$
and write 
\[
K_Y = f^* (K_X +D) + F 
\]
with $F=\sum_j b_j F_j$.
Here we assume that if $F_j$ is the strict transform of $D_i$,
we take $b_j=-d_i$,
and all the other $F_j$'s are exceptional divisors of $f$.

The pair $(X,D)$ is called \emph{log canonical}  (resp.\ 
\emph{klt}) at $x \in X$ 
if $b_j \geq -1$ (resp.\ $b_j > -1$) for any log resolution $f$ and any $j$ with $x \in f(F_j)$.
If $(X,D)$ is log canonical at $x$ and $b_j > -1$ for any log resolution $f$ and any \emph{exceptional} $F_j$ with $x \in f(F_j)$,
$(X,D)$ is called 
\emph{plt} at $x$.
The pair $(X,D)$ is called log canonical (resp.\ klt, plt) if $(X,D)$ is log canonical (resp.\ klt, plt) at any $x \in X$.

The \emph{multiplier ideal} $\calj(X,D)$ of $(X,D)$ is defined as
\[
\calj(X,D) := f_* \calo_Y(\lceil F \rceil),
\]
which does not depend on the choice of the log resolution $f$.

The \emph{log canonical threshold} of $D$ is defined to be
\[
\lct(D)= \max\{ s \geq 0 \, | \, (X,s D) \text{ is log canonical}\}.
\]
Similarly,
the log canonical threshold of $(X,D)$ at $x \in X$ is 
\[
\lct_x(D)= \max\{ s \geq 0 \, | \, (X,s D) \text{ is log canonical at } x\}.
\]

A subvariety $Z$ of $X$ is called a \emph{log canonical center} of $(X,D)$
if there exists a log resolution $f$ of $(X,D) $ and some $j$ such that $b_j \leq -1$ and $f(F_j)=Z$.
We note that a log canonical pair $(X,D)$ is plt if and only if $(X,D)$ has no log canonical center of codimension $\geq 2$.


\begin{rem}\label{rem_max_ideal}
If $\{x\}$ is the \emph{unique} log canonical center of $(X,D)$ containing $x$,
we have $\Supp \calo_X / \calj(X,D) =\{x\}$ around $x$.
Furthermore,
if $(X,D) $ is log canonical at $x \in X$,
every negative coefficient of $\lceil F \rceil$ over $x$ is $-1$.
Hence $\calj(X,D)$ is reduced at $x$,
that is,
$\calj(X,D) = \cali_x$ holds around $x$ in such a case.
\end{rem}

The following theorem about the existence of minimal log canonical centers is known
(see \cite{EL},\cite{He},\cite{Ka}).
We note that for surface this theorem can be easily proved.

\begin{thm}\label{thm_minimal _center}
Let $X$ be a smooth variety 
and $D$ be an effective $\Q$-divisor on $X$ such that $(X,D) $ is log canonical.
Then every irreducible component of the intersection of two log canonical centers of $(X,D)$
is also a log canonical center of $(X,D)$.

In particular, if $(X,D) $ is log canonical and not klt at $x \in X$,
there exists the unique minimal log canonical center $Z$ of $(X,D)$ containing $x$.
Furthermore, $Z$ is normal at $x$. 
\end{thm}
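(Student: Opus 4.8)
The plan is to deduce the second assertion from the first, and to prove the first by an explicit modification of a log resolution, following the method of Kawamata \cite{Ka}.

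\emph{Reductions.} For the ``in particular'' part, assume the intersection statement and fix $x$. Let $Z_1,\dots,Z_k$ be the finitely many log canonical centers of $(X,D)$ containing $x$. By the first assertion the irreducible component through $x$ of $Z_i\cap Z_j$ is again a log canonical center, and iterating, the component $Z$ through $x$ of $Z_1\cap\dots\cap Z_k$ is a log canonical center with $Z\subseteq Z_i$ for every $i$; hence $Z$ is the unique minimal log canonical center through $x$. For the first assertion it suffices to argue locally: let $Z_0$ be an irreducible component of $Z_1\cap Z_2$, choose a general point $x\in Z_0$, and shrink $X$ so that near $x$ we have $Z_1\cap Z_2=Z_0$ as sets. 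It is then enough to produce a log canonical place of $(X,D)$ whose center dominates $Z_0$.

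\emph{The main construction.} Choose a common log resolution $f\colon Y\arw X$ and write $K_Y+B_Y=f^*(K_X+D)$ with $B_Y=-\sum_i a_iE_i$, so that $(Y,B_Y)$ is crepant to $(X,D)$ and the log canonical places are exactly the components $E_i$ with coefficient $1$ in $B_Y$, i.e.\ $a_i=-1$. Pick log canonical places $E_1$ over $Z_1$ and $E_2$ over $Z_2$. The crux, granted below, is that $E_1$ and $E_2$ can be arranged so that a component of $E_1\cap E_2$ dominates $Z_0$. Blowing up such a component, which is a transverse codimension--two stratum of a simple normal crossing divisor, the codimension--two discrepancy formula gives for the new exceptional $E_{12}$, using crepancy,
\[
 a(E_{12};X,D)=a(E_{12};Y,B_Y)=(2-1)-(1+1)=-1 .
\]
Thus $E_{12}$ is a log canonical place, and its center $\overline{f(E_{12})}\subseteq f(E_1)\cap f(E_2)=Z_1\cap Z_2$ dominates $Z_0$; being closed, irreducible and contained in $Z_0$ near $x$, it equals $Z_0$. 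Hence $Z_0$ is a log canonical center.

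\emph{The main obstacle.} The hardest step is precisely the nonemptiness of $E_1\cap E_2$ over the generic point of $Z_0$: a priori the places lying over $Z_1$ and over $Z_2$ could be disjoint in $f^{-1}(x)$. To force an intersection I would base change to the generic point $\eta$ of $Z_0$ and apply the Kolla\'r--Shokurov connectedness theorem \cite{KM} to the crepant pair $(Y,B_Y)$ over $X$: after a tie-breaking perturbation of $D$ by a small ample $\Q$--divisor, arranged so that near $x$ the only log canonical centers are $Z_1$, $Z_2$ and the one sought, the non-klt locus of the fibre over $\eta$ is connected. This chains $E_1$ to $E_2$ through places whose successive intersections are nonempty and dominate $Z_0$, and repeatedly blowing up these intersections yields a place dominating $Z_0$ as in the previous paragraph. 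The delicate points are to keep the pair log canonical throughout the perturbation and to guarantee that the connecting places lie over $Z_0$ and not over a larger subvariety.

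\emph{Normality.} Finally, for the normality of $Z$ at $x$ I would again use connectedness: after tie-breaking so that $(X,D)$ is plt at the generic point of $Z$ with $Z$ realised by a single log canonical place $E$, the general fibre of $f|_E\colon E\arw Z$ meets the non-klt locus in a connected set, so its Stein factorisation is trivial near $x$; equivalently, Kawamata's subadjunction \cite{Ka} yields an effective $\Q$--divisor $D_Z$ with $(Z,D_Z)$ klt and $(K_X+D)|_Z\sim_\Q K_Z+D_Z$, forcing $Z$ to be normal at $x$.
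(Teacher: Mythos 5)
The paper gives no proof of this theorem at all: it is quoted as a known result with pointers to \cite{EL}, \cite{He}, \cite{Ka}, together with the remark that the surface case (the only one used later) is easy. So the benchmark is the proof in those references, and your sketch does follow exactly their strategy: a common log resolution, the Koll\'ar--Shokurov connectedness theorem localized at the generic point $\eta$ of $Z_0$, tie-breaking perturbations, and blow-ups of codimension-two strata of the coefficient-one locus. Your reductions (finiteness of centers, iterated intersections giving the unique minimal center) and the discrepancy computation $a(E_{12};X,D)=(2-1)-(1+1)=-1$ are correct.

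However, the two hard steps remain genuine gaps. First, the tie-breaking step is circular as formulated: you cannot arrange ``that near $x$ the only log canonical centers are $Z_1$, $Z_2$ and the one sought,'' since the sought center is precisely what the argument must produce. What is actually needed (and what \cite{Ka} carries out) is a perturbed boundary $D'$, log canonical near $\eta$, whose non-klt locus is contained in $Z_1\cup Z_2$ while log canonical places over both $Z_1$ and $Z_2$ survive; only then does the connected chain force a junction stratum mapping into $Z_1\cap Z_2=Z_0$ and through $\eta$, hence onto $Z_0$. Moreover, since discrepancies are affine in the boundary, a log canonical place of the perturbed pair is not automatically one of $(X,D)$, so the perturbation must be built so that the junction place is common to both pairs; without this, the intermediate links of your chain may have centers strictly larger than $Z_0$ and the argument collapses. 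Second, your normality argument is insufficient in principle: connectedness of the fibres of $E\arw Z$ only yields that the normalization $Z^{\nu}\arw Z$ is bijective near $x$, and a finite birational bijection need not be an isomorphism (cuspidal curve). Normality requires showing that $\mathcal{O}_Z\arw f_{*}\mathcal{O}_E$ is an isomorphism near $x$ via Kawamata--Viehweg vanishing on the resolution; that vanishing argument is the real content of the proofs in \cite{He} and \cite{Ka}, and also of subadjunction, which in any case is not proved in \cite{Ka} (the Fujita-freeness paper you cite for it). Note finally that for the paper's application only the surface case matters, and there normality is elementary: if a one-dimensional minimal center $C$ had $\mult_x C\geq 2$, blowing up $x$ produces an exceptional divisor of discrepancy at most $1-2=-1$, so either $(X,D)$ fails to be log canonical at $x$ or $\{x\}$ is a strictly smaller log canonical center, contradicting minimality -- this smoothness is exactly what is used in \autoref{clm1}.
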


\section{Proof}

\subsection{Proof of \autoref{main_rem}}

As stated in Introduction,
we will construct a divisor $F_0$ in \autoref{thm_LPP}.
In fact, we can construct such a divisor easily by a standard technique developed in the study of Fujita's base point freeness conjecture. 
Roughly speaking,
we cut down minimal log canonical centers to obtain a $0$-dimensional log canonical center.
In higher dimensional case,
the arguments and suitable explicit estimation are difficult and complicated,
but they are relatively easy for surfaces (see \cite[Section 2]{EL}).

Throughout this subsection,
$X$ is an abelian surface
and $\pi : X' \arw X $ is the blow-up at the origin $o \in X$.
We denote by $E \subset X'$ the exceptional divisor.

\vspace{2mm}

\begin{prop}\label{main_lem}
Let 
$B$ be an ample $\Q$-divisor on $X$.
If $\pi^* B -2E $ is big and $(B.C) > 1$ holds for any elliptic curve $C \subset X$,
there exists an effective $\Q$-divisor $F_0$ on $X$ such that $B- F_0$ is ample and $\calj(X,F_0)=\cali_o$.
\end{prop}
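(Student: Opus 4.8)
The plan is to construct $F_0$ directly by the standard ``create a singularity, then cut down the log canonical center'' method, exploiting the two features special to an abelian surface, namely $K_X = 0$ and the absence of rational curves. First I would produce a singularity at $o$. Since the big cone of $X'$ is open and $\pi^* B - 2E$ lies in it, the class $\pi^* B - (2+\delta)E$ is still big for some rational $\delta > 0$; choosing an effective $\Q$-divisor in this class and pushing it forward by $\pi$ yields an effective $\Q$-divisor $D_1 \sim_\Q B$ with $\mult_o D_1 > 2$. Then $(X, D_1)$ fails to be log canonical at $o$, so with $c := \lct_o(D_1)$ we have $c < 1$ (indeed $c \le 2/\mult_o D_1$ from the first blow-up), the pair $(X, cD_1)$ is log canonical but not klt at $o$, and $B - cD_1 \sim_\Q (1-c)B$ is ample. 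By \autoref{thm_minimal _center} there is a unique minimal log canonical center $Z \ni o$.

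If $Z = \{o\}$ I am essentially finished: after a tie-breaking perturbation (adding a small general multiple of $B$) making $\{o\}$ the unique log canonical center, \autoref{rem_max_ideal} gives $\calj(X, F_0) = \cali_o$ with $F_0 \sim_\Q \alpha B$, $\alpha < 1$, and $B - F_0$ ample. The substance lies in the case $\dim Z = 1$. Here $Z$ is normal, hence smooth at $o$, and since an abelian surface carries no rational curves we have $g(Z) \ge 1$. Adjunction (the different) together with $K_X = 0$ gives $c\,(B\cdot Z) = \deg\bigl((K_X + cD_1)|_Z\bigr) \ge 2g(Z) - 2$. For $g(Z) \ge 2$ this already forces $(B\cdot Z) > 2$, whereas for $g(Z) = 1$ the relation is vacuous and it is exactly the hypothesis $(B\cdot Z) > 1$ that supplies the needed positivity. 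This is where the two assumptions enter: bigness of $\pi^* B - 2E$ drives the first step, and $(B.C) > 1$ drives the elliptic sub-case of the second.

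To cut $Z$ down to the point $o$, I would restrict to $Z$. The different of $(X, cD_1)$ along $Z$ makes $(Z, \mathrm{Diff})$ log canonical at $o$, and I would then add $t\,D_2$, where $D_2 \sim_\Q B$ is a general effective divisor obtained by lifting, via surjectivity of $H^0(X, mB) \to H^0(Z, mB|_Z)$ for $m \gg 0$, a section on $Z$ vanishing to order $\approx m(B\cdot Z)$ at $o$, so that $\mult_o(D_2|_Z) \approx (B\cdot Z)$. Choosing $t$ so that the coefficient of $\mathrm{Diff} + t\,D_2|_Z$ at $o$ exceeds $1$ makes $(Z, \mathrm{Diff} + t\,D_2|_Z)$ non-log-canonical at $o$; inversion of adjunction then makes $\{o\}$ a log canonical center of $(X, cD_1 + t\,D_2)$. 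A final tie-breaking and \autoref{rem_max_ideal} yield $\calj(X, F_0) = \cali_o$ for $F_0 = cD_1 + t\,D_2$ plus a small perturbation, with $F_0 \sim_\Q \alpha B$ and $\alpha < 1$.

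The main obstacle is the quantitative bookkeeping in this last step: one must exhibit a coefficient $t$ that is at once large enough to force non-log-canonicity on $Z$ at $o$ (roughly $t\,(B\cdot Z) > 1 - \coeff_o \mathrm{Diff}$) and small enough that $\alpha = c + t < 1$, so that $B - F_0$ stays ample. I expect to bound $\coeff_o \mathrm{Diff}$ from below using the part of $D_1$ meeting $Z$ transversally at $o$, so that $\mult_o D_1 > 2$ is used a second time rather than only to create the center, and to combine this with $c \le 2/\mult_o D_1$ and the subadjunction degree relation above. The tightest case is $g(Z) = 1$ with $(B\cdot Z)$ just above $1$, where the hypothesis $(B.C) > 1$ must be used sharply; genuinely producing an admissible $t$ there, rather than merely a plausible one, is the crux of the argument.
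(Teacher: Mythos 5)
Your overall strategy --- produce $\mult_o D_1>2$ from the bigness of $\pi^*B-(2+\delta)E$, take the log canonical threshold, tie-break if the minimal centre is already $\{o\}$, and cut down a one-dimensional centre $Z$ --- is the same as the paper's, and the first half is fine. But the gap is exactly where you locate it, and it is a real one, not bookkeeping: your cutting-down step does not close when $Z$ is elliptic with $1<(B\cdot Z)\leq 2$. In your scheme the added divisor $tD_2$ must satisfy both $t<1-c$ (to keep $B-F_0$ ample, since $D_2\equiv B$) and $t\,\mult_o(D_2|_Z)>1-\coeff_o(\mathrm{Diff})$. Even with the most optimistic estimates --- $\mult_o(D_2|_Z)=(B\cdot Z)$ and $\coeff_o(\mathrm{Diff})\geq 2c-1$, which is the best one can extract from $\mult_o(cD_1)\leq 2$ --- these two constraints are simultaneously satisfiable only when $(B\cdot Z)>2$. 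So the hypothesis $(B\cdot C)>1$ is never actually brought to bear, and the elliptic case, which is the entire content of the proposition, remains open. (Your subadjunction discussion of $g(Z)\geq 2$ is also a detour: the paper shows directly that a one-dimensional minimal centre must be an elliptic curve through $o$, because the generality of the choices in the Zariski decomposition $\pi^*B-tE=P_t+N_t$ forces the strict transform $C'$ to lie in $\Supp N_t$, hence $(C'^2)<0$, which combined with $(C^2)\geq 0$ and smoothness at $o$ gives $(C^2)=0$ and $\mult_o C=1$.)

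The missing idea is to change the decomposition rather than to add on top of $cD_1$. The paper discards $cD$ except for the curve it produced: it sets $F=C+c_1D_1$ with $D_1\equiv B-C$ and $c_1=\max\{s\geq 0\mid (X,C+sD_1)\ \text{lc}\}<1$. Two facts special to abelian surfaces make this work. First, $cD-C$ is effective, hence nef, so $B-C\equiv(1-c)B+(cD-C)$ is ample: the budget for the second divisor is essentially all of $B$, not the sliver $(1-c)B$. Second, $(C^2)=0$, so $\deg\bigl((B-C)|_C\bigr)=(B\cdot C)>1$, and on the elliptic curve $C$ this class is represented by an effective divisor with multiplicity $>1$ at $o$ and $<1$ elsewhere; adjunction and inversion of adjunction then make $(X,C+D_1)$ non-lc at $o$ and plt near $C\setminus\{o\}$, so $c_1<1$ and $B-F\equiv(1-c_1)(B-C)$ is ample. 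This is how $(B\cdot C)>1$, rather than $(B\cdot C)>2$, suffices.
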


\begin{rem}
If $X$ is a general surface,
we usually assume $(B.C) \geq 2$ for a curve $C$
to construct a divisor $F_0$ as in \autoref{main_lem} (see \cite[1.10 Variant]{EL}, for example).
It is not so surprising that we can relax the numerical condition for abelian surfaces.

\end{rem}

By \autoref{rem_max_ideal},
we want $F_0$ such that $\{o\}$ is the unique log canonical center of a log canonical pair $(X,F_0)$.
Considering a small perturbation,
it suffices to find an effective $\Q$-divisor with a $0$-dimensional minimal center:

\begin{lem}\label{lem_minimal_center}
To prove \autoref{main_lem},
it suffices to construct an effective $\Q$-divisor $F$ on $X$ such that $B-F$ is ample,
$(X,F)$ is log canonical and has a $0$-dimensional minimal center.
\end{lem}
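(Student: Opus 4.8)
The plan is to reduce the construction of a divisor with reduced multiplier ideal equal to $\cali_o$ to the purely numerical task of producing a log canonical pair whose unique minimal center at $o$ is the point itself, and then to clean up the output by a small perturbation. The key observation is that \autoref{rem_max_ideal} already tells us that once $\{o\}$ is the unique log canonical center through $o$ of a log canonical pair, the multiplier ideal is automatically reduced at $o$ and equals $\cali_o$ there; what remains is to globalize this (so that $\calj(X,F_0)=\cali_o$ as ideals on all of $X$, not merely in a neighbourhood of $o$) and to upgrade ``$B-F$ ample'' to the exact statement in \autoref{main_lem}.

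First I would take the divisor $F$ furnished by the hypothesis of \autoref{lem_minimal_center}: an effective $\Q$-divisor with $B-F$ ample, $(X,F)$ log canonical, and a $0$-dimensional minimal center. By \autoref{thm_minimal _center} the minimal center through $o$ is unique, so after a preliminary reduction we may arrange it to be $\{o\}$ itself; if there are other $0$-dimensional log canonical centers $x_1,\dots,x_k$ elsewhere, I would remove them by subtracting small general ample pieces or, more simply, by perturbing $F$ so that the log canonical locus is concentrated at $o$. Concretely, since $B-F$ is ample, for small $\epsilon>0$ the divisor $(1-\epsilon)F$ is klt away from $o$ while still non-klt at $o$ after adding a tiny multiple of a general member of a sufficiently positive linear system passing through $o$; this is the standard perturbation (``tie-breaking'') trick, and the ampleness of $B-F$ provides exactly the room needed to absorb the perturbation while keeping $B-F_0$ ample.

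The mechanism I would use for the perturbation is the following. Set $F_0=(1-\epsilon)F+\epsilon' A$, where $A$ is an effective $\Q$-divisor numerically small and chosen so that $(X,F_0)$ remains log canonical at $o$ with $\{o\}$ as its unique log canonical center, while all other centers are destroyed. One checks that $B-F_0=(B-F)+\epsilon F-\epsilon' A$ stays ample for $\epsilon,\epsilon'$ small because $B-F$ is ample and ampleness is an open condition. By \autoref{thm_minimal _center} and the normality statement there, the $0$-dimensional minimal center is forced to be reduced, and then \autoref{rem_max_ideal} gives $\calj(X,F_0)=\cali_o$ on a neighbourhood of $o$; since we have arranged $(X,F_0)$ to be klt away from $o$, the multiplier ideal is trivial there, so the local identity upgrades to the global equality $\calj(X,F_0)=\cali_o$.

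The main obstacle I expect is controlling the perturbation so as to simultaneously (a) preserve the non-klt, minimal-center-at-$o$ behaviour, (b) eliminate any competing log canonical centers away from $o$, and (c) keep $B-F_0$ ample with an honest positive margin. These three requirements pull in different directions: making the pair ``more klt'' to satisfy (b) risks destroying the non-kltness at $o$ needed for (a), and each adjustment eats into the ampleness budget in (c). The resolution lies in exploiting that $B-F$ is \emph{strictly} ample, giving a fixed amount of positivity that can be divided among the competing demands, and in the fact that a $0$-dimensional center can be isolated by an arbitrarily small perturbation. I would carry out the estimate carefully to verify that the $\epsilon$'s can be chosen positive, after which the conclusion of \autoref{main_lem} follows.
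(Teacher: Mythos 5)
Your core mechanism --- the tie-breaking perturbation that isolates the $0$-dimensional center as the unique log canonical center, followed by \autoref{rem_max_ideal} to conclude that the multiplier ideal is the reduced maximal ideal there and trivial elsewhere --- is exactly the paper's, and that part of your argument is sound. The genuine gap is the sentence ``after a preliminary reduction we may arrange it to be $\{o\}$ itself.'' The hypothesis of \autoref{lem_minimal_center} only provides a $0$-dimensional minimal center $\{x\}$ at some unspecified point $x\in X$; nothing forces $x=o$. Indeed, in the application the threshold $c=\lct(X,D)$ is the \emph{global} log canonical threshold, which may be achieved at a point other than $o$, in which case $(X,cD)$ is klt at $o$ and its centers sit elsewhere. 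No small perturbation of the kind you describe can then create a log canonical center at $o$: that would require exactly the concentrated positivity at $o$ that \autoref{main_lem} is trying to manufacture in the first place.

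The missing idea is the translation. Since $X$ is an abelian surface, after the tie-breaking one sets $F_0=t_x^*F$, where $t_x(y)=y+x$. Translations act trivially on numerical equivalence classes, so $B-F_0\equiv B-F$ remains ample, and $\calj(X,F_0)=t_x^{*}\calj(X,F)=\cali_o$. This one line is where the group structure of $X$ enters the lemma, and it is what your ``preliminary reduction'' needs to be. A secondary, more minor imprecision: in your perturbation $F_0=(1-\ep)F+\ep' A$ the two parameters are independent, which does not by itself guarantee that the pair stays non-klt at the chosen point; the standard fix (and the paper's) is to take the coefficient of $F$ to be $c_{\ep}=\max\{s\geq 0 \mid (X,sF+\ep F')\text{ is log canonical}\}$ for a general $F'$ through the point, so that the resulting pair is log canonical and non-klt precisely there.
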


\begin{proof}
Let $\{x\} \subset X$ be a $0$-dimensional minimal center of $(X,F)$.
Choose a general effective divisor $F'$ on $X$ which contains $x$.
By replacing $F$ with $c_{\ep} F + \ep F'$ for $0 < \ep \ll 1$,
where $c_{\ep}= \max\{ s \geq 0 \, | \, (X,s F+\ep F') \text{ is log canonical}\}$,
we may assume that $\{x\} \subset X$ is the unique minimal center of $(X,F)$
as in the proof of \cite[Proposition 2.4]{He}.
We note that the ampleness of $B - F$ is preserved if $\ep$ is sufficiently small.
Then we have $\calj(X,F)=\cali_x$ by \autoref{rem_max_ideal}. 
Set $F_0=t_x^* F$ to be the pull back of $F$ by the automorphism
\[
t_x :X \rightarrow X \quad : \quad  y \mapsto y+x .
\]
Then $B - F_0 \equiv B - F $ is ample and $\calj(X,F_0)=\cali_o$ holds.
\end{proof}

\begin{proof}[Proof of \autoref{main_lem}]
We will construct $F$ as in \autoref{lem_minimal_center}.

Since bigness is an open condition,
$\pi^* B - t E $ is big for a rational number $t $ if $0 < t-2 \ll 1$.
Fix such $t$
and let $\pi^* B - t E =P_t+N_t$ be the Zariski decomposition.
Since $P_t$ is nef and big,
we can find an effective divisor $N'$ such that $P_t-\frac{1}{k} N'$ is ample for $k \gg 0$ by \cite[Theorem 2.3.9]{La}.
For $k \gg 0$,
we choose a general effective $\Q$-divisor $A \equiv P_t-\frac{1}{k} N'$ and set
\begin{align}\label{eq_def_D}
D' := A + \frac{1}{k} N' +N_t \equiv \pi^* B - t E , \quad D:=\pi_* D' \equiv B.
\end{align}
Then $D$ is an effective $\Q$-divisor on $X$ with the multiplicity $\mult_o (D) \geq t$.
Hence we have
\[
c:= \lct(X,D) \leq \lct_o (X,D) 
\leq \frac{2}{t} <1.
\]

If there exists a $0$-dimensional minimal center $\{x\}$ of $(X,cD)$,
we can take $F:= cD$, which satisfies the condition in \autoref{lem_minimal_center}.
Hence we may assume that there exists no $0$-dimensional minimal center of $(X,cD)$.
Then there exists a $1$-dimensional minimal center $C \subset X$ of $(X,cD)$.

\begin{claim}\label{clm1}
The minimal center $C$ is an elliptic curve containing the origin $o$.
\end{claim}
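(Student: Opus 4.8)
The plan is to read off the geometry of $C$ from the structure of $D$ together with the two numerical inputs, bigness of $\pi^*B-2E$ and the hypothesis $(B.C)>1$, the latter entering only to treat the borderline elliptic case afterwards. First I would record what a $1$-dimensional log canonical center looks like on a smooth surface. Since $X$ is a surface, every exceptional divisor of a log resolution contracts to a point, so a $1$-dimensional log canonical center of $(X,cD)$ must be a component of $D$ whose coefficient, after scaling by $c$, is exactly $1$. Thus $C$ is a reduced irreducible curve and I may write $D=\tfrac1c C+\Delta$ with $\Delta\ge0$ and $C\not\subset\Supp\Delta$. By \autoref{thm_minimal _center} the minimal center $C$ is normal, hence smooth, and as $K_X\equiv0$ adjunction gives $C^2=2p_a(C)-2$. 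Because an abelian surface contains no rational curves, $p_a(C)\ge1$, so $C^2\ge0$, and $C$ is elliptic precisely when $C^2=0$.

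Next I would locate the center at the origin. The content of the construction in \eqref{eq_def_D} is that $\mult_o(D)\ge t>2$, so $\lct_o(X,D)\le 2/t<1$; since $A$ was chosen general, the singularities of $D$ away from $o$ are mild and the global threshold $c=\lct(X,D)$ is computed at $o$, making $o$ a non-klt point of $(X,cD)$. As I have assumed that $(X,cD)$ has no $0$-dimensional minimal center, the minimal center through $o$ must be $1$-dimensional; this is $C$, and in particular $o\in C$. Being smooth, $C$ satisfies $\mult_o(C)=1$.

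Finally, to see that $C$ is elliptic I would argue by contradiction, assuming $C^2=2p_a(C)-2\ge2$. The key observation is that the multiplicity forced at $o$ is strictly larger than what is needed to produce a center, and the strict positivity $C^2>0$ provides exactly the room to spend this surplus along $C$. Concretely, I would run a cutting-down of the minimal center $C$: using that $\deg(B|_C)=(B.C)$ is large (from $o\in C$ and $D=\tfrac1c C+\Delta$ one extracts $(B.C)\ge\tfrac1c(C^2-1)+t$) together with $C^2>0$, one produces an effective $\Q$-divisor numerically below $B$ whose minimal center at $o$ is $0$-dimensional, contradicting the standing assumption. Hence $C^2=0$ and $C$ is an elliptic curve through $o$.

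The main obstacle is this last step: purely numerical inequalities (Hodge index combined with $D=\tfrac1c C+\Delta$) do \emph{not} bound $C^2$, because $B^2$ is unconstrained from above, so one genuinely needs the no-$0$-dimensional-center hypothesis and the cutting-down mechanism, carried out carefully on the smooth curve $C$ through $o$. The elliptic case $C^2=0$ is exactly where cutting-down fails, which is why the separate hypothesis $(B.C)>1$ must be invoked in the subsequent analysis rather than here. A secondary technical point is confirming that the log canonical threshold is attained at $o$, so that $o\in C$, which rests on the generality of $A$ in \eqref{eq_def_D}.
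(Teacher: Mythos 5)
Your setup is fine --- identifying $C$ as a component of $D$ with $\coeff_C(cD)=1$, using normality of the minimal center to get smoothness and $m:=\mult_o(C)\leq 1$, and noting $(C^2)\geq 0$ on an abelian surface --- but the decisive step, showing $(C^2)=0$ and $o\in C$, is exactly where your argument has a gap. Your proposed contradiction (if $(C^2)\geq 2$, cut down along $C$ to produce a $0$-dimensional center) does not close: the cutting-down procedure would produce a $0$-dimensional minimal center for a \emph{new} pair $(X,\,C+c_1D_1)$, not for $(X,cD)$, so it does not contradict the standing assumption that $(X,cD)$ has no $0$-dimensional minimal center. You flag this step yourself as ``the main obstacle'' and leave it as a sketch, so the claim is not actually proved.

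The paper's mechanism is different and much shorter. Since $\coeff_C(D)=\tfrac1c>1$, while $A$ is chosen general and $N'$ is divided by $k\gg 0$ in the construction of $D'$ in \ref{eq_def_D}, the only components of $D'$ that can carry coefficient $>1$ are components of $N_t$; hence the strict transform $C'$ of $C$ lies in $\Supp N_t$, the negative part of the Zariski decomposition, and so $(C'^2)<0$. Then
\[
0>(C'^2)=(C^2)-m^2,
\]
combined with $(C^2)\geq 0$ and $m\in\{0,1\}$, forces $(C^2)=0$ and $m=1$ at once: $C$ is elliptic and $o\in C$. Note that membership $o\in C$ falls out of $m=1$; the paper never needs your secondary argument that the log canonical threshold is ``computed at $o$.'' To salvage your route you would have to actually carry out the cutting-down in the case $(C^2)>0$ and restructure the proof of \autoref{main_lem} so that the claim is bypassed entirely; as written, the proposal does not establish the statement.
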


\begin{proof}
Since $ C$ is a log canonical center of $(X,cD)$,
the coefficient 
of $C$ in $D$ is $\frac{1}{c} >1$. 
Since $A$ is general and $k$ is sufficiently large in \ref{eq_def_D},
the strict transform $C' \subset X'$ of $C$ is contained in $\Supp N_t$.
Hence $C' $ is a negative curve
and we have 
\begin{align}\label{eq_ineq}
0 > (C'^2)= ((\pi^*C - m E)^2)=(C^2) -m^2,
\end{align}
where $m := \mult_o(C)  \geq 0$. 

Since $C $ is a curve on an abelian surface,
$(C^2) \geq 0$ holds.
On the other hand,
$m=0$ or $1$
since a minimal center is normal by \autoref{thm_minimal _center}
and hence $C$ is smooth.
Thus $(C^2)=0 $ and $m=1$ hold by \ref{eq_ineq}.
Since $X$ is an abelian surface,
$(C^2)=0$ implies that $C$ is an elliptic curve.
\end{proof}

We cut down the minimal log canonical center $C$.
We refer the reader to \cite{He}, \cite{Ka}, \cite[Section 10.4]{La2} for detail.

By assumption and \autoref{clm1},
we have
\[
(B-C.C) =(B.C) -(C^2) = (B.C) >1.
\]
Thus there exists an effective $\Q$-divisor $\overline{D}_1 \equiv (B-C)|_C $ on $C$
with $\mult_o  (\overline{D}_1) >1$.
We take such $\overline{D}_1 $ 
so that $\mult_p  (\overline{D}_1) < 1 $ for any $p \in C \setminus \{o\}$.

Since the coefficient of $C$ in $cD$ is $1$,
$cD - C$ is effective hence nef because $X$ is an abelian surface.
Thus 
\[
B-C =(1-c) B +(cB-C) 
\equiv (1-c)B+(cD-C)
\]
is ample by $c <1$.
Hence
we can take an effective $\Q$-divisor $D_1 \equiv B-C$ on $X$
such that $C \not \subset \Supp D_1$ and $D_1|_C = \overline{D}_1$ as in Step 2 in the proof of \cite[Proposition 3.2]{He}.
We take general such $D_1$,  
then $(X,C+D_1)$ is klt outside $C$.
Since  $\mult_o  (D_1|_C)  = \mult_o  (\overline{D}_1) >1$,
$(X,C+D_1)$ is not log canonical at $o \in X$ by adjunction \cite[Theorem 5.50]{KM}.
On the other hand, since $\mult_p  (D_1|_C)   =\mult_p  (\overline{D}_1) < 1 $ for any $p \in C \setminus \{o\}$,
$(X,C+D_1) $ is plt in a neighborhood of $C \setminus \{o\}$ by inversion of adjunction \cite[Theorem 5.50]{KM}.
Set
\[
F=C+c_1 D_1 ,
\]
where $c_1 :=\max \{ s \geq 0 \, | \, (X,C+s D_1) \text{ is log canonical}\} <1$.
Then $\{o\} \subset X$ is the minimal log canonical center of $(X,F)$.
Since 
\[
B-F= B- (C+c_1 D_1) \equiv (1-c_1)(B-C)
\]
is ample by $c_1 <1$,
this $F$ satisfies the condition in \autoref{lem_minimal_center}.
Hence
\autoref{main_lem} is proved.
\end{proof}


\begin{cor}\label{thm_suff}
Let $p$ be a non-negative integer
and $L$ be an ample line bundle on $X$.
If $\pi^* L -2(p+2)E$ is big and $(L.C) > p+2$ holds for any elliptic curve $C \subset X$,
then $(N_p)$ holds for $L$.
\end{cor}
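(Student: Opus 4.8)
The plan is to deduce \autoref{thm_suff} directly from \autoref{main_lem} together with the input \autoref{thm_LPP}, by applying \autoref{main_lem} to the normalized ample $\Q$-divisor $B := \frac{1}{p+2}L$. So the first thing I would do is verify that this $B$ satisfies the two hypotheses of \autoref{main_lem}. Ampleness of $B$ is immediate since $L$ is ample and $p+2>0$. For the bigness condition, I would rewrite
\[
\pi^* B - 2E = \frac{1}{p+2}\left(\pi^* L - 2(p+2)E\right),
\]
so that $\pi^* B - 2E$ is a positive rational multiple of the divisor $\pi^* L - 2(p+2)E$, which is big by assumption; hence $\pi^* B - 2E$ is big. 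For the intersection condition, for any elliptic curve $C \subset X$ I would use the hypothesis $(L.C) > p+2$ to get
\[
(B.C) = \frac{1}{p+2}(L.C) > \frac{1}{p+2}(p+2) = 1.
\]

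Having checked these, \autoref{main_lem} produces an effective $\Q$-divisor $F_0$ on $X$ such that $B - F_0$ is ample and $\calj(X,F_0) = \cali_o$. The final step is to observe that this $F_0$ is exactly the divisor demanded by \autoref{thm_LPP}: since $B - F_0 = \frac{1}{p+2}L - F_0$, the ampleness of $B - F_0$ is precisely condition (i), and $\calj(X,F_0) = \cali_o$ is condition (ii). Applying \autoref{thm_LPP} then yields that $(N_p)$ holds for $L$, which is the desired conclusion.

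I do not expect a genuine obstacle here, as all the substantive work (the blow-up at the origin, the construction of $D$ with prescribed multiplicity, and the cutting-down of the one-dimensional minimal center) has already been carried out in the proof of \autoref{main_lem}, while the passage from a suitable multiplier ideal to property $(N_p)$ is supplied by the external \autoref{thm_LPP}. The only point requiring care is bookkeeping of constants: the numerical hypotheses of \autoref{thm_suff} are stated for $L$ with the factor $2(p+2)$ and the threshold $p+2$, and these rescale exactly to the factor $2$ and the threshold $1$ required by \autoref{main_lem} under the normalization $B = \frac{1}{p+2}L$, which is why the statement is phrased with precisely these constants.
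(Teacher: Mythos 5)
Your proposal is correct and follows exactly the paper's own argument: apply \autoref{main_lem} to $B=\frac{1}{p+2}L$ (checking that the hypotheses rescale as you describe) and then conclude via \autoref{thm_LPP}. The paper's proof is just a terser version of the same two-step deduction.
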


\begin{proof}
We can apply \autoref{main_lem} to $B=\frac{1}{p+2} L$.
Then Property ($N_p$) holds for $L$ by \autoref{thm_LPP} and \autoref{main_lem}.
\end{proof}


\begin{proof}[Proof of \autoref{main_rem}]
As stated in Introduction,
it suffices to show $(1) \Rightarrow (2)$ under $(L^2) > 4(p+2)^2$.
In this case,
$\pi^* L -2 (p+2)E$ is big.
Hence $(1) \Rightarrow (2)$ follows from \autoref{thm_suff}
\end{proof}

\subsection{Remark}

By a similar argument as in the proof of \autoref{main_lem},
we can show the following lemma,
which recovers  \cite[Theorem 2.5 (1)]{KL}.
We denote by $\coeff_C(D)$ the coefficient of a prime divisor $C$ in a $\Q$-divisor $D$.

\begin{lem}
Let $X$ be a smooth projective surface and $B$ be an ample $\Q$-divisor on $X$.
Let $\pi : X' \arw X $ be the blow-up at a point $x \in X$ and $E \subset X'$ be the exceptional divisor.
Assume that $\pi^* B -2E $ is big
and let $ \pi^* B -2E  =P+N$ be the Zariski decomposition.
If 
\begin{align}\label{eq_condition}
\coeff_{C'} (N) < 1 \quad \text{ for any prime divisor } C'  \text{ on } X' \text{ with } (C'.E)=1,
\end{align}
there exists an effective $\Q$-divisor 
$F_0 \equiv c_0 B$ for some $0 < c_0 < 1$ such that $\calj(X,F_0 ) = \cali_x$ holds in a neighborhood of the point $x$.
\end{lem}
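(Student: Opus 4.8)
The plan is to run the argument of \autoref{main_lem} almost verbatim, adapting it to an arbitrary surface. Two things change. First, there is no longer a group translation available, so I cannot move a center to a prescribed point as in \autoref{lem_minimal_center}; instead I will work with the \emph{local} threshold $\lct_x$ and localize near $x$ (which is harmless, since the conclusion $\calj(X,F_0)=\cali_x$ is only required in a neighborhood of $x$). Second, the abelian input $(C^2)\geq 0$ must be replaced by the hypothesis \eqref{eq_condition}. As in \autoref{main_lem} I first pass to a rational $t>2$ with $\pi^*B-tE$ big and take its Zariski decomposition $\pi^*B-tE=P_t+N_t$; since the coefficients of a Zariski decomposition vary continuously and only finitely many prime divisors occur in the negative parts for $t$ in a compact interval, the strict inequality \eqref{eq_condition} is inherited by $N_t$ (for the prime divisors meeting $E$ once) for every $t$ sufficiently close to $2$.

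Next I construct $D\equiv B$ with $\mult_x(D)\geq t$ exactly as in \eqref{eq_def_D}: choose a general ample representative $A$ of a small perturbation of $P_t$ and set $D:=\pi_*(A+\tfrac1kN'+N_t)$. Put $c:=\lct_x(X,D)\leq 2/t<1$. Shrinking $X$ to a neighborhood of $x$ on which $(X,cD)$ is log canonical, \autoref{thm_minimal _center} provides a minimal center $Z\ni x$, normal at $x$. If $Z=\{x\}$ I perturb as in \autoref{lem_minimal_center} to make it the unique minimal center and take $F_0:=cD\equiv cB$, which already has the required numerical class. Otherwise $Z$ is a smooth curve $C\ni x$; since $\mult_x(C)=1$ its strict transform satisfies $(C'.E)=1$, and because the coefficient $1/c>1$ of $C$ in $D$ can only come from $N_t$ (the part $A$ is general and $\tfrac1kN'$ is negligible), we get $C'\subset\Supp N_t$, hence $(C'^2)<0$ and, by \eqref{eq_condition}, $\coeff_{C'}(N_t)<1$.

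The computation that replaces \autoref{clm1} is then immediate. By the projection formula and the orthogonality $(P_t.C')=0$ one has $(B.C)=(N_t.C')+t$ and $(C^2)=(C'^2)+1$, so that
\[
((B-C).C)=(t-1)+\bigl[(N_t.C')-(C'^2)\bigr].
\]
Writing $N_t=aC'+N''$ with $a=\coeff_{C'}(N_t)<1$ and $N''$ effective not containing $C'$, the bracket equals $(a-1)(C'^2)+(N''.C')$, which is $>0$ because $(C'^2)<0$. Hence $((B-C).C)>t-1>1$, the positivity that lets me cut $C$ down: I pick $\overline D_1\equiv (B-C)|_C$ with $\mult_x(\overline D_1)>1>\mult_p(\overline D_1)$ for $p\in C\setminus\{x\}$, lift it to a general $D_1\equiv B-C$ with $C\not\subset\Supp D_1$, and apply adjunction and inversion of adjunction to $C+c_1D_1$ to make $\{x\}$ the minimal center.

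The step I expect to be the main obstacle is the numerical normalization $F_0\equiv c_0B$, which is genuinely stronger than what \autoref{main_lem} produces: the cut-down divisor $C+c_1D_1\equiv c_1B+(1-c_1)C$ is not proportional to $B$. To remedy this I will isolate $x$ by a divisor proportional to $B$. Since $((\pi^*B-2E)^2)=(B^2)-4>0$, the systems $|kB|$ separate enough jets at $x$ to yield an effective $M\equiv\delta B$ with $\mult_x(M)>2\delta$, and for a suitable combination $F_0:=c_0(D+\lambda M)\equiv c_0(1+\lambda\delta)B$ the strict inequality $\mult_x(M)>2\delta$ forces the numerical multiple $c_0(1+\lambda\delta)<1$, while $((B-C).C)>1$ guarantees that no positive-dimensional center through $x$ survives, leaving $\{x\}$ as the unique minimal center. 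Checking these two requirements simultaneously — isolation of the center and $c_0<1$ — is the delicate point, and it is exactly here that the two hypotheses, bigness of $\pi^*B-2E$ and the coefficient bound \eqref{eq_condition}, are used together.
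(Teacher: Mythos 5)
You have all the ingredients of the correct argument in hand but fail to assemble them, and instead take a detour that creates a problem you then cannot solve. In the curve case you correctly derive two facts: (a) if a curve $C\ni x$ is a log canonical center of $(X,cD)$, then $\coeff_C(D)\geq 1/c>1$; and (b) $C$ is normal, hence smooth at $x$, so $(C'.E)=1$ and therefore $\coeff_{C'}(N_t)<1$ by \ref{eq_condition}. But these two facts already contradict each other: since $A$ is general and $\tfrac1k N'$ is negligible in \ref{eq_def_D}, one has $\coeff_C(D)=\coeff_{C'}(D')\leq \tfrac1k\coeff_{C'}(N')+\coeff_{C'}(N_t)<1$ for $k\gg 0$. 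This is the whole point of hypothesis \ref{eq_condition}: it rules out one-dimensional minimal centers at $x$ altogether, so the minimal center of $(X,cD)$ at $x$ is $\{x\}$, and the perturbation of \autoref{lem_minimal_center} applied to $cD\equiv cB$ immediately gives $F_0\equiv c_0B$ with $c_0$ close to $c<1$. That is the paper's proof, and the argument is essentially finished at the point where you write ``the coefficient $1/c>1$ of $C$ in $D$ can only come from $N_t$'' — the very next observation should be that $N_t$ cannot supply a coefficient $\geq 1$ along $C'$.

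Because you do not notice this, you go on to cut down the curve $C$ as in \autoref{main_lem}, which produces $F=C+c_1D_1\equiv c_1B+(1-c_1)C$, not proportional to $B$ — a defect you correctly identify but do not repair. The closing paragraph (isolate $x$ by some $M\equiv\delta B$ with $\mult_x(M)>2\delta$ and take ``a suitable combination'' $c_0(D+\lambda M)$) is not a proof: no argument is given that such a combination has $\{x\}$ as its unique minimal log canonical center while keeping $c_0(1+\lambda\delta)<1$, and you yourself flag this as the delicate unverified point. As written the proposal is therefore incomplete; the fix is not to shore up that last paragraph but to delete the entire cutting-down branch, since \ref{eq_condition} makes that branch vacuous.
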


\begin{proof}
As in the proof of \autoref{main_lem},
we take $t$ with $0 < t-2 \ll 1$ and let $\pi^* B -t E =P_t +N_t$ be the Zariski decomposition.
We note that $N_t$ also satisfies \ref{eq_condition} by \cite[Proposition 1.16]{BKS},
i.e.\
$\coeff_{C'} (N_t) < 1$ holds if $(C'.E) = 1$.
We set $D'$ on $X'$ and $D=\pi_* D'$ on $X$ as \ref{eq_def_D}.
We can take $D'$ so that $\coeff_{C'} (D') < 1$ holds if $(C'.E) = 1$.
Since $\mult_x (D) \geq t >2$, we have
$
c:= \lct_x(X,D) 
\leq \frac{2}{t} <1.
$

Assume that the minimal center of $(X,cD)$ at $x$ is a curve $C$.
Then $C $ is smooth at $x$ and 
hence we have $(C'.E) =1$ for the strict transform $C'$ of $C$.
Thus $\coeff_C (D)= \coeff_{C'} (D') < 1$ holds by the definition of $D'$.
Since $c <1$, we have $\coeff_C (c D)  < 1$,
which contradicts the assumption that $C$ is the minimal center of $(X,cD)$.



Thus the minimal center of $(X,cD)$ at $x$ is $\{x\}$.
As in the proof of \autoref{lem_minimal_center},
we can take $F_0$ as a small perturbation of $cD \equiv cB$.
\end{proof}

\section{Naive Questions}

We end this note by questions in higher dimensions.
As a generalization of Fujita's base point freeness conjecture \cite{Fu},
the following conjecture is known:

\begin{conj}[{\cite[5.4 Conjecture]{Ko2}}]\label{conj_fujita}
Let $Y$ be a smooth projective variety, $y \in Y$ be a point and
$L$ be a nef and big line bundle on $Y$ . Assume that if $y \in Z \subsetneq Y$ is an irreducible
subvariety then
\[
(L^{\dim Z}.Z) \geq (\dim Y)^{\dim Z}, \quad \text{and} \quad (L^{\dim Y}) > (\dim Y)^{\dim Y}. 
\]
Then $K_Y + L$ is base point free at $y$.
\end{conj}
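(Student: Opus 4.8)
The plan is to reduce \autoref{conj_fujita} to the construction of a single effective $\Q$-divisor concentrating a log canonical singularity at $y$, exactly as in the surface situation of \autoref{main_lem}, and then to invoke Nadel vanishing. Concretely, it suffices to produce an effective $\Q$-divisor $D \equiv \lambda L$ with $0 < \lambda < 1$ such that $(Y,D)$ is log canonical in a neighborhood of $y$ and $\{y\}$ is its unique log canonical center there, so that $\calj(Y,D) = \cali_y$ near $y$ by \autoref{rem_max_ideal}. Granting this, the ideal-sheaf sequence twisted by $K_Y+L$ together with Nadel vanishing --- applicable because $(K_Y+L) - D \equiv (1-\lambda)L$ is nef and big --- gives $H^1(Y, \calo_Y(K_Y+L) \otimes \calj(Y,D)) = 0$. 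Hence $H^0(Y, K_Y+L) \to (K_Y+L) \otimes \calo_Y/\cali_y$ is surjective, i.e.\ some section does not vanish at $y$, which is exactly base point freeness at $y$.

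To build $D$ I would first produce a non-klt point by the usual volume count. Write $n = \dim Y$. Since $h^0(Y,kL)$ grows like $\frac{(L^n)}{n!}k^n$, while imposing multiplicity $m$ at $y$ costs $\approx \frac{m^n}{n!}k^n$ conditions, the strict inequality $(L^n) > n^n$ produces, for $k \gg 0$, a section of $kL$ of multiplicity $> nk$ at $y$; rescaling gives an effective $D_0 \equiv L$ with $\mult_y(D_0) > n$. Its threshold then satisfies $c := \lct_y(D_0) \leq n/\mult_y(D_0) < 1$, so $(Y, cD_0)$ is log canonical but not klt at $y$, with $cD_0 \equiv cL$ and $c < 1$. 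Let $Z$ be the minimal log canonical center through $y$ given by \autoref{thm_minimal _center}. If $\dim Z = 0$ we are done after the perturbation of \autoref{lem_minimal_center}; otherwise one cuts $Z$ down. This is the inductive heart: restricting to $Z$ via (sub)adjunction, the residual positivity of $L|_Z$ --- controlled precisely by the subvariety bound $(L^{\dim Z}.Z) \geq n^{\dim Z}$ --- is used to manufacture a divisor on $Z$ of large multiplicity at $y$, which is lifted back to $Y$ and added on to produce a log canonical pair whose minimal center through $y$ has strictly smaller dimension. Iterating at most $n$ times reaches a $0$-dimensional center.

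The hard part is the bookkeeping of coefficients across these cutting steps, and it is precisely this that leaves \autoref{conj_fujita} open in general. Each restriction--lift introduces correction terms: Kawamata's subadjunction formula contributes an effective boundary together with a nef defect divisor on $Z$, and although \autoref{thm_minimal _center} guarantees that $Z$ is normal \emph{at} $y$, it need not be nonsingular elsewhere, so the defect is difficult to estimate uniformly. One must nonetheless guarantee that the accumulated divisor stays numerically equivalent to $\lambda L$ with $\lambda$ strictly below $1$ after all $n$ steps. The surface case succeeds because there is at most a single cut and the intersection estimate \ref{eq_ineq} is sharp and elementary, whereas in higher dimension the interaction between the volume estimate that creates the center and the successive subadjunction defects has no known bound matching the clean constants $n^{\dim Z}$; closing that gap is the genuine obstacle rather than any individual step.
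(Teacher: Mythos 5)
This statement is not a theorem of the paper but an open conjecture (Koll\'ar's Conjecture 5.4, a generalization of Fujita's base point freeness conjecture), quoted in \autoref{conj_fujita} only to motivate the questions in the final section; the paper contains no proof of it, so there is nothing to compare your argument against. Your proposal is, correctly, not a proof either: you lay out the standard Angehrn--Siu/Ein--Lazarsfeld/Helmke strategy (volume count to create a non-klt point, passage to the minimal log canonical center, inductive cutting via subadjunction, then Nadel vanishing), and you yourself identify that the accumulation of subadjunction defects across the cutting steps cannot currently be bounded so as to keep the total divisor numerically below $L$ with the sharp constants $(\dim Y)^{\dim Z}$. That diagnosis is accurate --- this is exactly why the conjecture remains open in dimension $\geq 3$ (the known results, e.g.\ Angehrn--Siu, require roughly quadratic rather than linear bounds in $\dim Y$). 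So the honest verdict is: the outline is the right one and matches what the literature (and this paper, in its surface incarnation \autoref{main_lem}) uses, but it is a proof strategy with an acknowledged and genuinely unresolved gap, not a proof; no referee could accept it as establishing the conjecture, and you should not present the first paragraph's reduction as if the required divisor $D$ were available.
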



The condition (1) in \autoref{thm_KL} and the condition $(L^2) > 4(p+2)^2$ in \autoref{main_rem} are equivalent to
\[
(B.C) >1=(\dim C)^{\dim C}, \quad \text{and} \quad  (B^2) > 4=(\dim X)^{\dim X} 
\]
respectively,
where $B=\frac{1}{p+2} L$ and $C \subset X$ is any elliptic curve.
Naively,
we have the following question as an analog of \autoref{conj_fujita}:

\begin{ques}\label{ques1}
Let $p$ be a non-negative integer,
$X$ be an abelian variety,
and $L$ be an ample line bundle on $X$.
Set $B=\frac{1}{p+2} L$.
Assume that $(B^{\dim Z} .Z) > (\dim Z)^{{\dim Z}}$ holds for any abelian subvariety $\{o\} \neq  Z \subset X$.
Then does property ($N_p$) hold for $L$?
\end{ques}

We note that in this question we assume $(B^{\dim Z} .Z)   > (\dim Z)^{{\dim Z}} $, which is weaker than $(B^{\dim Z} .Z)  \geq (\dim X)^{\dim Z}$ for $Z \subsetneq X$.
By \autoref{thm_suff},
we also have a little stronger version:

\begin{ques}\label{ques2}
Let $p$ be a non-negative integer,
$X$ be an abelian variety,
and $L$ be an ample line bundle on $X$.
Let $\pi : X' \arw X $ be the blow-up at the origin $o \in X$.
Set $B=\frac{1}{p+2} L$.
Assume that for any abelian subvariety $\{o\} \neq  Z \subset X$, $\left( \pi^*B - (\dim Z)E \right) |_{Z'} $ is big, where $Z' \subset X'$ is the strict transform of $Z$.
Then does property ($N_p$) hold for $L$?
\end{ques}

\begin{rem}

If $X$ is simple, that is, there exists no abelian subvariety $\{o\} \neq Z \subsetneq X$,
then $(B^{g}) > g^g$ is  the unique condition in \autoref{ques1} for $g=\dim X$.
The following are known in any dimension.

\begin{enumerate}
\item By \cite[Corollary B]{LPP},
 ($N_p$) holds for $L$
if $(X,L)$  is very general 
and $(B^g) > (4g)^g /2$.
\item J.~Iyer studied the projective normality, i.e.\ ($N_0$), for simple abelian varieties. 
By \cite[Theorem 1.2]{Iy},
 if $X$ is simple and $(B^g) >( g{!} )^2$ for $B=\frac12 L$, 
then $L$ satisfies ($N_0$).
We note that 
\[
( g{!} )^2 = \prod_{i=1}^g i (g+1-i) \geq  \prod_{i=1}^g g =g^g.
\]
\end{enumerate}
\end{rem}

\end{document}